\newtheorem{theorem}{Theorem}[section]
\newtheorem{definition}[theorem]{Definition}
\newtheorem{proposition}[theorem]{Proposition}
\newtheorem{remark}[theorem]{Remark}
\newtheorem{example}{Example}
\numberwithin{equation}{section}
\begin{document}
\title{\Large \bf Anticipating backward stochastic Volterra integral equations \footnotemark[1]}

\author{Jiaqiang Wen\footnotemark[2] and Yufeng Shi\footnotemark[2] \footnotemark[3]}

\date{}

\renewcommand{\thefootnote}{\fnsymbol{footnote}}

\footnotetext[1]{This work was supported by National Natural Science Foundation of China (Grant Nos. 11371226 and 11231005),
    Foundation for Innovative Research Groups of National Natural Science Foundation of China (Grant  No. 11221061),
    the 111 Project (Grant No. B12023).}

\footnotetext[2]{Institute for Financial Studies and School of Mathematics, Shandong University, Jinan 250199, China.}

\footnotetext[3]{Corresponding author. E-mail addresses: jqwen@mail.sdu.edu.cn, yfshi@sdu.edu.cn}

\maketitle
\begin{abstract}
We introduce and study a new type of integral equations called anticipating backward stochastic Volterra integral equations (anticipating BSVIEs).
In these equations the generator involves not only the present values but also the future values of the solutions.
We obtain the existence and uniqueness theorem and a comparison theorem  for the solutions to these anticipating BSVIEs.
\end{abstract}

\textbf{keywords}: Anticipating backward stochastic Volterra integral equation, Backward stochastic Volterra integral equation, Comparison theorem.

\text{2010 MSC}: 60H10, 60H20.

\section{Introduction}
Stochastic Volterra integral equations (SVIEs, for short) were introduced by Berger and Mizel \cite{Berger},
and developed to the anticipating SVIEs by Pardoux and Protter \cite{Protter},  Al\`{o}s and Nualart \cite{Nualart}.
General backward stochastic Volterra integral equations (BSVIEs, for short) were introduced by Yong \cite{Yong2,Yong3}.
In more details, let $(\Omega,\mathcal{F},P,\mathcal{F}_{t},t\geq 0)$ be a complete stochastic basis such that $\mathcal{F}_{0}$ contains all $P$-null elements of $\mathcal{F}$ and suppose that the filtration is generated by a $d$-dimensional standard Brownian motion $W=\{W(t); t\geq 0\}$.
Let $(Y(\cdot),Z(\cdot,\cdot))$ be the solution of the following backward stochastic Volterra integral equation:
\begin{equation}\label{3}
 Y(t)=\psi(t) + \int_t^T g(t,s,Y(s),Z(t,s),Z(s,t)) ds  - \int_t^T Z(t,s) dW(s), \ \ t\in [0,T],
\end{equation}
where $g:\Omega\times \Delta\times \mathbb{R}^{m}\times \mathbb{R}^{m\times d}\times \mathbb{R}^{m\times d}\rightarrow \mathbb{R}^{m}$
and $\psi:\Omega\times[0,T]\rightarrow \mathbb{R}^{m}$ are given maps with $\Delta=\{ (t,s)\in[0,T]^{2}| \ t\leq s\}$.
Such an equation was introduced by Yong \cite{Yong2,Yong3}.
A special case of (\ref{3}) with $g(\cdot)$ independent of $Z(s,t)$ and $\psi(t)\equiv \xi$ was studied by Lin \cite{Lin} a little earlier.
Some recent developments of BSVIEs can be found in Eduard and Ludger \cite{Kromer}, Shi, Wen and Yong \cite{Wen}, Shi and Wang \cite{Wang},
 Shi, Wang and Yong \cite{Shi2,Shi4}, Wang and Yong \cite{Yong}, Yong \cite{Yong4}, Zhang \cite{Zhangx}, etc., among theories and applications.
The same as anticipating SVIEs, it is a natural question if there are the corresponding ``anticipating'' BSVIEs.


Recently, Peng and Yang \cite{Yang} introduced the anticipating (or anticipated) backward stochastic differential equation (BSDE, for short) as follows,
\begin{equation}\label{21}
  \begin{cases}
    -dY_{t}=f(t,Y_{t},Z_{t},Y_{t+\delta_{t}},Z_{t+\zeta_{t}}) dt - Z_{t} dW_{t}, \ \ \ 0\leq t\leq T; \\
      Y_{t}=\xi_{t}, \ \  Z_{t}=\eta_{t}, \ \ \  T\leq t\leq T+K,
  \end{cases}
\end{equation}
where $\xi_{\cdot}, \eta_{\cdot}$ are given adapted stochastic processes, and $\delta_{\cdot}, \zeta_{\cdot}$ are given nonnegative deterministic functions.
See Pardoux and Peng \cite{Peng}, El Karoui, Peng, and Quenez \cite{Peng2}, Ma and Yong \cite{Ma}, Chen and Wu \cite{Wu}, Yang and Elliott \cite{Yang2}, etc.,
for systematic discussions about BSDEs and anticipating BSDEs.

These tempt us to introduce the following new type of BSVIEs:
\begin{equation}\label{18}
  \begin{cases}
  Y(t)=\psi(t) + \int_t^T g(t,s,Y(s),Z(t,s),Z(s,t),Y(s+\delta_{s}),Z(t,s+\zeta_{s}),Z(s+\zeta_{s},t)) ds \\
      \ \ \ \ \ \ \  \ \ \ - \int_t^T Z(t,s) dW(s), \ \ t\in [0,T]; \\
  Y(t)=\psi(t), \ \ t\in [T,T+K];\\
Z(t,s)=\eta(t,s), \ \ (t,s)\in [0,T+K]^{2}\setminus [0,T]^{2}.
  \end{cases}
\end{equation}
See Section 3 for detailed discussions. We call equation (\ref{18}) the anticipating backward stochastic Volterra integral equation
(ABSVIE, for short).
One can note that, comparing with BSVIE (\ref{3}), the distinct development of ABSVIE (\ref{18}) is that
the generator of (\ref{18}) involves not only the  present values of solutions but also the future ones of solutions.

In this paper, we establish the existence and uniqueness of solutions of ABSVIE (\ref{18}) under Lipschitz condition.
The method used to prove the existence and uniqueness theorem (Theorem \ref{0} below) is convenient than the four steps method in Yong \cite{Yong3}.
Since the comparison theorem is a fundamental tool, which plays an important role in the theory and applications of BSVIEs,
we also prove a comparison theorem for ABSVIEs, which generalises one of the main results in Wang and Yong \cite{Yong}.
Similar to BSVIE (\ref{3}) and the anticipating BSDE (\ref{21}), ABSVIE (\ref{18}) can also be applied in mathematical finance, risk management, especially in the field of stochastic optimal controls.  About this topic, we will give some further studies in the coming future researches.

The rest of the paper is organized as follows. In Section 2, we introduce some preliminaries.
Section 3 is devoted to the proof of the existence and uniqueness theorem for ABSVIEs.
A comparison theorem for ABSVIEs is also established in Section 4.

\section{Preliminaries}

Let $(\Omega,\mathcal{F},P,\mathcal{F}_{t},t\geq 0)$ be a complete stochastic basis such that $\mathcal{F}_{0}$ contains all $P$-null elements of $\mathcal{F}$ and suppose that the filtration is generated by a $d$-dimensional standard Brownian motion $W=\{W(t),t\geq 0\}$.
The Euclidean norm of a vector $x\in\mathbb{R}^{m}$ will be denoted by $|x|$,
and for a $m\times d$ matrix $A$, we define $\| A \|=\sqrt{Tr AA^{\ast}}$.
Given $T>0$, and let $K\geq 0$ be a constant, denote
\begin{equation*}
  \Delta=\{ (t,s)\in[0,T]^{2}| \ 0\leq t\leq s\leq T \}; \ \ \ \widetilde{\Delta}=\{ (t,s)\in[0,T+K]^{2}| \ 0\leq t\leq s\leq T+K \}.
\end{equation*}
Also, for $H=\mathbb{R}^{m},\mathbb{R}^{m\times d}$ and $t\in[0,T],$ denote\\
$\bullet$ $L^{2}(\mathcal{F}_{t};H) =\{\xi:\Omega\rightarrow H \mid \xi$ is $\mathcal{F}_{t}$-measurable, $E[|\xi|^{2}]< \infty\}$;\\
$\bullet$ $L_{\mathcal{F}_{T}}^2(0,T;H)
  =\big \{\psi:\Omega\times [0,T]\rightarrow H \mid \psi(t)$ is $\mathcal{F}_{T\vee t}$-measurable,
 $E\int_0^{T} |\psi(t)|^{2} dt < \infty\big \}$;\\
$\bullet$ $L_{\mathcal{F}}^2(0,T;H)
  =\big \{X:\Omega\times [0,T]\rightarrow H \mid X(t)$ is $\mathcal{F}_{t}$-measurable,
  $E\int_0^T |X(t)|^{2} dt$ $< \infty\big \}$;\\
$\bullet$ $L_{\mathcal{F}}^2(\Delta;H)
  =\big\{Z:\Omega\times \Delta \rightarrow H  \mid Z(t,s)$ is $\mathcal{F}_{s}$-measurable,
        $E\int_0^{T} \int_t^{T} |Z(t,s)|^{2} dsdt$ $< \infty\big\}$;\\
$\bullet$ $L_{\mathcal{F}}^2([0,T]^{2};H)
  =\big\{Z:\Omega\times [0,T]^{2}\rightarrow H  \mid Z(t,s)$ is $\mathcal{F}_{s}$-measurable,
        $E\int_0^{T} \int_0^{T} |Z(t,s)|^{2} dsdt$ $< \infty\big\}$.\\
For any $\beta\geq 0$, let $\mathcal{H}^{2}_{\Delta}$ be the space of all pairs
 $(Y,Z)\in L_{\mathcal{F}}^2(0,T;\mathbb{R}^{m})\times L_{\mathcal{F}}^2(\Delta;\mathbb{R}^{m\times d})$ under the following norm
\begin{equation*}
  \| (Y(\cdot),Z(\cdot,\cdot)) \|_{\mathcal{H}_{\Delta}^{2}}
  \equiv \left[ E\int_0^{T} \left(e^{\beta t}|Y(t)|^{2} + \int_t^{T} e^{\beta s}\|Z(t,s)\|^{2} ds\right) dt \right]^{\frac{1}{2}}< \infty.
\end{equation*}
Clearly, $\mathcal{H}_{\Delta}^{2}$ is a Hilbert space.
Similarly, we can define $L_{\mathcal{F}_{T}}^2(0,T+K;H)$, $L_{\mathcal{F}}^2(0,T+K;H)$,
 $L_{\mathcal{F}}^2(\widetilde{\Delta};H)$, $L_{\mathcal{F}}^2([0,T+K]^{2};H)$ and $\mathcal{H}_{\widetilde{\Delta}}^{2}$.
From the definition, we note that the space $L_{\mathcal{F}_{T}}^2(T,T+K;H)$ is equivalent to the space $L_{\mathcal{F}}^2(T,T+K;H)$.

Let's consider the following BSVIE, which was introduced by Yong \cite{Yong2,Yong3},
\begin{equation}\label{22}
 Y(t)=\psi(t) + \int_t^T g(t,s,Y(s),Z(t,s),Z(s,t)) ds  - \int_t^T Z(t,s) dW(s), \ \ t\in [0,T],
\end{equation}
where $\psi(\cdot)\in L_{\mathcal{F}_{T}}^2(0,T;\mathbb{R}^{m})$, and
$g:\Delta\times \mathbb{R}^{m}\times \mathbb{R}^{m\times d}\times \mathbb{R}^{m\times d}\times\Omega\longrightarrow \mathbb{R}^{m} $
is $\mathcal{B}(\Delta\times \mathbb{R}^{m}\times \mathbb{R}^{m\times d}\times \mathbb{R}^{m\times d})\otimes\mathcal{F}_{T}$-measurable such that $s\mapsto g(t,s,y,z,\vartheta)$ is $\mathcal{F}$-progressively measurable
for all $(t,y,z,\vartheta)\in [0,s]\times \mathbb{R}^{m}\times \mathbb{R}^{m\times d}\times \mathbb{R}^{m\times d}$, $s\in[0,T]$.

\begin{itemize}
\item[(H1)] Suppose there exists a constant $L>0$ such that,
$P$-a.s., for all $(t,s) \in \Delta, \ y,y' \in \mathbb{R}^{m}, \ z,z',\vartheta,$ $\vartheta' \in \mathbb{R}^{m\times d}$,
\begin{equation*}
\begin{split}
 & |g(t,s,y,z,\vartheta)-g(t,s,y',z',\vartheta')|\leq L\left(|y-y'| + \|z-z'\| + \|\vartheta-\vartheta'\|\right);\\
 & and \  E\int_0^T\int_t^T |g_{0}(t,s)|^{2} ds dt< \infty, \ where \ g_{0}(t,s)=g(t,s,0,0,0).
\end{split}
\end{equation*}
\end{itemize}

\begin{definition}
An adapted solution $(Y(\cdot),Z(\cdot,\cdot))$ of BSVIE (\ref{22}) is called an adapted M-solution if the following holds:
\begin{equation}\label{54}
  Y(t) =E[Y(t)] + \int_0^t Z(t,s)dW(s), \ \    0\leq t\leq T.
\end{equation}
\end{definition}

The following propositions can be found in \cite{Wang,Yong,Yong3}.

\begin{proposition}\label{19}
Under the assumption (H1), for any $\psi(\cdot)\in L_{\mathcal{F}_{T}}^2(0,T;\mathbb{R}^{m})$,  BSVIE (\ref{22}) admits a unique adapted M-solution.
\end{proposition}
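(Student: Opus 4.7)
The plan is to set up a contraction mapping argument on the Hilbert space $\mathcal{H}^{2}_{\Delta}$ equipped with the $\beta$-weighted norm introduced above, choosing the parameter $\beta>0$ sufficiently large at the end. Given any $(y(\cdot),z(\cdot,\cdot))\in\mathcal{H}^{2}_{\Delta}$, I would freeze these arguments inside the generator $g$ and construct a new pair $(Y,Z)$ as follows. For each fixed $t\in[0,T]$, consider the martingale
\begin{equation*}
M(s;t)=E\left[\psi(t)+\int_{t}^{T}g(t,r,y(r),z(t,r),z(r,t))\,dr\,\bigg|\,\mathcal{F}_{s}\right],\quad s\in[t,T],
\end{equation*}
and apply the Brownian martingale representation theorem to obtain a process $Z(t,\cdot)\in L^{2}_{\mathcal{F}}(t,T;\mathbb{R}^{m\times d})$ and a random variable $Y(t):=M(t;t)$ satisfying the BSVIE relation on $\Delta$. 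The adapted M-solution requirement is then enforced by invoking martingale representation a second time on the $\mathcal{F}_{t}$-measurable random variable $Y(t)-E[Y(t)]$, which yields $Z(t,s)$ for $s<t$. Together this defines a map $\Phi:(y,z)\mapsto(Y,Z)$ on $\mathcal{H}^{2}_{\Delta}$, whose fixed point will be the adapted M-solution.

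Next I would verify that $\Phi$ maps $\mathcal{H}^{2}_{\Delta}$ into itself via an a priori estimate based on the Lipschitz hypothesis on $g$ and the integrability condition on $g_{0}$. For the contraction step, take two inputs $(y,z),(y',z')$ with outputs $(Y,Z),(Y',Z')$, denote the differences by hats, and apply It\^o's formula to the process $s\mapsto e^{\beta s}|\hat{Y}(t)-\int_{t}^{s}\hat{Z}(t,r)dW(r)|^{2}$ between $s=t$ and $s=T$. Taking expectation kills the stochastic integrals, and Young's inequality applied to the generator increment leads (after Fubini in $(t,s)$) to an estimate of the form
\begin{equation*}
\|\Phi(y,z)-\Phi(y',z')\|^{2}_{\mathcal{H}^{2}_{\Delta}}\le\frac{C(L,T)}{\beta}\,\|(y-y',z-z')\|^{2}_{\mathcal{H}^{2}_{\Delta}}.
\end{equation*}
Taking $\beta$ large enough makes $\Phi$ a strict contraction on $\mathcal{H}^{2}_{\Delta}$, and the Banach fixed point theorem delivers the unique $(Y,Z)$.

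The main technical obstacle is the diagonal term $Z(s,t)$ inside the generator: its Lipschitz contribution enters with the roles of $t$ and $s$ interchanged relative to the outer $dt$-integration, so controlling it in the weighted $\mathcal{H}^{2}_{\Delta}$-norm forces a careful Fubini exchange together with bookkeeping of the weights $e^{\beta t}$ versus $e^{\beta s}$ on the region $\{t\le s\}$. The second martingale representation step used to extend $Z(t,s)$ to $s<t$ also has to be shown not to spoil the contraction; this follows from the $L^{2}$-isometry $E\int_{0}^{t}\|\hat{Z}(t,s)\|^{2}ds\le E|\hat{Y}(t)|^{2}$, integrated against $e^{\beta t}\,dt$, so that the extra component of the norm is dominated by the $Y$-component already controlled above. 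Once these ingredients are combined, existence and uniqueness of the adapted M-solution follow simultaneously from the fixed point.
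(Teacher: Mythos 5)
Your argument is correct in substance, but it is worth noting that the paper itself does not prove Proposition \ref{19}: it is imported from the references \cite{Wang,Yong,Yong3}, where the original proof (Yong, 2008) proceeds by the four-step method (solving on subintervals near $T$, a stochastic Fredholm equation, and a patching argument). What you propose is the later, simpler route of Shi and Wang: a single global contraction in the $\beta$-weighted norm, built on the a priori estimate (\ref{10}) for the simple BSVIE with frozen generator plus martingale representation. This is precisely the scheme the authors themselves run to prove Theorem \ref{0} for the anticipating equation, so your proof is the natural ``internal'' one for this paper; what it buys over the four-step method is brevity and the fact that it extends verbatim to the anticipating setting. One point to tighten: because the generator evaluates $z(s,t)$ with $t\le s$, i.e.\ at points of $[0,T]^2$ \emph{below} the diagonal, the input pair must already carry $z$ on all of $[0,T]^2$ subject to the M-condition (\ref{54}); the fixed point should therefore be sought in the space of such pairs (the analogue on $[0,T]$ of the paper's $\mathcal{M}^{2}[0,T+K]$, with the equivalent norm supported on $\Delta$) rather than literally in $\mathcal{H}^{2}_{\Delta}$. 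You do supply the two ingredients that make this work --- the isometry $E\int_{0}^{t}\|\hat{Z}(t,s)\|^{2}ds\le E|\hat{Y}(t)|^{2}$ coming from the M-condition, and the Fubini exchange matching the weight $e^{\beta s}$ on the sub-diagonal term to the weight $e^{\beta t}|\hat{Y}(t)|^{2}$ in the norm --- so this is a labelling of the ambient space to correct, not a gap in the argument.
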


\begin{proposition}\label{2}
Consider the following simple BSVIE
\begin{equation*}\label{8}
Y(t)=\psi(t) + \int_t^T g(t,s) ds - \int_t^T Z(t,s) dW(s), \ t\in [0,T],
\end{equation*}
 where $\psi(\cdot)\in L_{\mathcal{F}_{T}}^{2}(0,T;\mathbb{R}^{m})$ and  $g\in L_{\mathcal{F}}^2(\Delta;\mathbb{R}^{m})$.
Then
the above equation has a unique adapted solution $(Y,Z)\in \mathcal{H}_{\Delta}^{2}$, and the following estimate holds:
\begin{equation}\label{10}
\begin{split}
      &E  \int_0^T \left(e^{\beta t}|Y(t)|^{2} + \int_t^T e^{\beta s}\|Z(t,s)\|^{2} ds\right) dt\\
  \leq&  Ce^{\beta T}E \int_0^T |\psi(t)|^{2} dt + \frac{C}{\beta}E \int_0^T \int_t^T e^{\beta s}|g(t,s)|^{2} dsdt.
\end{split}
\end{equation}
Hereafter C is a positive constant which may be different from line to line.
\end{proposition}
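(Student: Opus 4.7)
\medskip

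\noindent\textbf{Proof proposal.} The key observation is that, since the driver $g(t,s)$ does not depend on the unknowns $(Y,Z)$, the equation decouples across different values of $t$: for each fixed $t\in[0,T]$, the right-hand side is simply a martingale representation problem for the $\mathcal{F}_T$-measurable terminal random variable
\[
\xi(t)\;\equiv\;\psi(t)+\int_t^T g(t,s)\,ds,
\]
which belongs to $L^2(\mathcal{F}_T;\mathbb{R}^m)$ for a.e.\ $t$ under the integrability hypotheses on $\psi$ and $g$. Thus the plan is (i) to construct $(Y,Z)$ pointwise in $t$ via the Brownian martingale representation theorem, (ii) to verify the required joint measurability so that $(Y,Z)\in\mathcal{H}_\Delta^2$, and (iii) to derive the weighted estimate by an It\^o-type argument in the $s$ variable.

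For step (i), set $Y(t):=E[\xi(t)\mid\mathcal{F}_t]$, which is $\mathcal{F}_t$-measurable. The martingale $M_s:=E[\xi(t)\mid\mathcal{F}_s]$, $s\in[t,T]$, admits a representation $M_s=Y(t)+\int_t^s Z(t,r)\,dW(r)$ for some $\mathcal{F}$-progressively measurable (in $s$) integrand $Z(t,\cdot)$, and evaluating at $s=T$ recovers (\ref{22}) in its simple form. Uniqueness is immediate: if two solutions exist, taking the difference and conditioning on $\mathcal{F}_t$ forces $Y$ to coincide, and then the It\^o isometry forces $Z$ to coincide as well. The joint measurability of $Z$ in $(t,s,\omega)$ is the most delicate technical point; this can be handled by a standard approximation argument, first for $\xi(t)$ piecewise constant in $t$ (where the representation is trivially jointly measurable) and then passing to the $L^2$-limit using the continuity of the representation map.

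For step (iii), define for each $t$ the auxiliary process
\[
X(t,s)\;:=\;Y(t)-\int_t^s g(t,r)\,dr+\int_t^s Z(t,r)\,dW(r),\qquad s\in[t,T],
\]
so that $X(t,t)=Y(t)$, $X(t,T)=\psi(t)$, and $dX(t,s)=-g(t,s)\,ds+Z(t,s)\,dW(s)$. Applying It\^o's formula to $e^{\beta s}|X(t,s)|^2$ on $[t,T]$, taking expectation to kill the stochastic integral, and rearranging yields
\[
e^{\beta t}E|Y(t)|^2+E\!\int_t^T\! e^{\beta s}\|Z(t,s)\|^2\,ds
=e^{\beta T}E|\psi(t)|^2+\beta E\!\int_t^T\! e^{\beta s}|X(t,s)|^2\,ds+2E\!\int_t^T\! e^{\beta s}X(t,s)\cdot g(t,s)\,ds.
\]
Splitting the cross term by Young's inequality $2X\cdot g\le\beta|X|^2+\tfrac1\beta|g|^2$ absorbs the $\beta|X|^2$ contribution into the left-hand side with the opposite sign, producing
\[
e^{\beta t}E|Y(t)|^2+E\!\int_t^T\! e^{\beta s}\|Z(t,s)\|^2\,ds
\le e^{\beta T}E|\psi(t)|^2+\frac{1}{\beta}E\!\int_t^T\! e^{\beta s}|g(t,s)|^2\,ds.
\]
Integrating in $t\in[0,T]$ gives (\ref{10}).

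The main obstacle is the joint/progressive measurability in step (i)--(ii); the computation in step (iii) is routine once the pointwise construction is in place, and the appearance of $1/\beta$ is precisely the payoff of pairing the exponential weight with Young's inequality.
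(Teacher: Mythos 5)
The paper itself does not prove this proposition; it is quoted from \cite{Wang,Yong,Yong3}, and your argument is essentially the standard proof given there: pointwise-in-$t$ martingale representation of $\xi(t)=\psi(t)+\int_t^T g(t,s)\,ds$ for existence and uniqueness, followed by It\^o's formula applied to $e^{\beta s}|X(t,s)|^2$ and Young's inequality for the estimate, then integration in $t$. The construction, the measurability discussion, and the final bound are all correct. One small slip: in your displayed identity the term $\beta E\int_t^T e^{\beta s}|X(t,s)|^2\,ds$ should appear on the \emph{left}-hand side (equivalently with a minus sign on the right), since It\^o's formula gives
\begin{equation*}
e^{\beta t}E|Y(t)|^2+E\int_t^T e^{\beta s}\|Z(t,s)\|^2\,ds+\beta E\int_t^T e^{\beta s}|X(t,s)|^2\,ds
= e^{\beta T}E|\psi(t)|^2+2E\int_t^T e^{\beta s}X(t,s)\cdot g(t,s)\,ds;
\end{equation*}
as written, Young's inequality would produce $2\beta E\int|X|^2$ on the right with nothing to absorb it. Your verbal description of the absorption shows you intended the correct sign, and with it the stated inequality and then (\ref{10}) follow exactly as you say.
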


\begin{proposition}\label{16}
For $i=0,1$, assume $g^{i}=g^{i}(t,s,y,z)$ satisfies (H1).
Let $(Y^{i},Z^{i})\in \mathcal{H}_{\Delta}^{2}$ be respectively the solutions of the following BSVIEs,
\begin{equation*}
 Y^{i}(t)=\psi^{i}(t) + \int_t^T g^{i}(t,s,Y^{i}(s),Z^{i}(t,s)) ds  - \int_t^T Z^{i}(t,s) dW(s), \ \ t\in [0,T].
\end{equation*}
Suppose $\overline{g}(t,s,y,z)$ satisfies (H1) such that
$y\mapsto \overline{g}(t,s,y,z)$ is nondecreasing with
   \begin{equation*}
     g^{0}(t,s,y,z)\leq \overline{g}(t,s,y,z)\leq g^{1}(t,s,y,z),
      \ \ \forall (t,y,z)\in  [0,s]\times \mathbb{R}^{m}\times \mathbb{R}^{m\times d}, \ a.s., \ a.e. \ s\in[0,T].
   \end{equation*}
Moreover, $\overline{g}_{z}(t,s,y,z)$ exists and
   \begin{equation*}
     \overline{g}_{z_{1}}(t,s,y,z),...,\overline{g}_{z_{d}}(t,s,y,z) \in \mathbb{R}_{d}^{m\times m},
      \ \ \forall (t,y,z)\in  [0,s]\times \mathbb{R}^{m}\times \mathbb{R}^{m\times d}, \ a.s., \ a.e. \ s\in[0,T].
   \end{equation*}
Then for any $\psi^{i}(\cdot)\in L_{\mathcal{F}_{T}}^2(0,T;\mathbb{R}^{m})$ satisfying
$ \psi^{0}(t)\leq \psi^{1}(t), \ \  a.s., \ t\in[0,T],$
the corresponding unique adapted solution $(Y^{i},Z^{i})\in \mathcal{H}_{\Delta}^{2}$ satisfies
   \begin{equation*}
   Y^{0}(t)\leq Y^{1}(t), \ \ a.s., \ t\in[0,T].
   \end{equation*}
\end{proposition}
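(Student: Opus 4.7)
The plan is a sandwich argument: reduce the comparison $Y^0\le Y^1$ to a chain of three one-sided inequalities via auxiliary BSVIEs sharing the intermediate driver $\overline{g}$. By Proposition \ref{19}, there exist unique adapted M-solutions $(\overline{Y}^i,\overline{Z}^i)\in\mathcal{H}_\Delta^2$ of
\begin{equation*}
\overline{Y}^i(t)=\psi^i(t)+\int_t^T\overline{g}(t,s,\overline{Y}^i(s),\overline{Z}^i(t,s))\,ds-\int_t^T\overline{Z}^i(t,s)\,dW(s),\qquad i=0,1,
\end{equation*}
and the assertion follows from the transitive chain $Y^0\le\overline{Y}^0\le\overline{Y}^1\le Y^1$. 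The outer two links compare equations with the same terminal but different drivers ($g^0\le\overline{g}$ and $\overline{g}\le g^1$), while the middle link compares equations with the same driver $\overline{g}$ but different terminals ($\psi^0\le\psi^1$).

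For each of the three links, I would subtract the defining equations and apply the standard mean-value linearization in $(y,z)$ to the difference of the drivers. Writing $(\Delta Y,\Delta Z)$ for the differences, this produces a linear BSVIE of the form
\begin{equation*}
\Delta Y(t)=\xi(t)+\int_t^T\bigl[\alpha(t,s)\Delta Y(s)+\beta(t,s)\Delta Z(t,s)+h(t,s)\bigr]ds-\int_t^T\Delta Z(t,s)\,dW(s),
\end{equation*}
with $|\alpha|,\|\beta\|\le L$; the monotonicity of $\overline{g}$ in $y$ forces $\alpha\ge 0$, and the hypotheses on $\psi^1-\psi^0$, $\overline{g}-g^0$, $g^1-\overline{g}$ force the free data $\xi\ge 0$ and $h\ge 0$ in each of the three cases. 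Viewed as a linear BSDE in the variable $s$ for each fixed $t$, the term $\beta\,\Delta Z$ can be absorbed by a Girsanov transform, and an integrating factor $\Gamma(t,s)=\exp\bigl(\int_t^s\alpha(t,r)\,dr\bigr)$ then produces an explicit conditional-expectation representation of $\Delta Y(t)$ in terms of $\xi$, $h$, and $\Gamma$, all of which are nonnegative.

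The main obstacle is precisely this Girsanov step in the multidimensional setting, since BSVIEs in dimension $m>1$ do not satisfy a naive comparison principle and the derivative-free proof by Itô/Tanaka breaks down. The structural hypothesis $\overline{g}_{z_k}(t,s,y,z)\in\mathbb{R}_d^{m\times m}$ is exactly the device introduced by Wang and Yong \cite{Yong} to ensure that $\mathcal{E}\bigl(\int_0^\cdot\beta(t,r)\,dW(r)\bigr)$ is a true martingale and thereby legitimize the coordinate-wise change of measure. Once this step is secured, the remaining estimates are routine: they follow from Proposition \ref{2} together with standard Gronwall-type arguments, and the argument is essentially the transcription of Wang and Yong's scheme to the slightly enlarged hypothesis framework considered here.
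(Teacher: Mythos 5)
The paper itself does not prove Proposition \ref{16}; it is imported from \cite{Wang,Yong,Yong3}, so your attempt has to be measured against the proof in those references (and against the paper's own Theorem \ref{15}, which reproves the harder anticipating version by the same mechanism). Your sandwich reduction $Y^0\le\overline{Y}^0\le\overline{Y}^1\le Y^1$ is the standard and correct first step, and your reading of the hypothesis $\overline{g}_{z_k}\in\mathbb{R}_d^{m\times m}$ as the device that legitimizes a componentwise Girsanov change of measure in dimension $m>1$ is accurate.

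The gap is in the claim that, after linearization, ``an integrating factor $\Gamma(t,s)=\exp\bigl(\int_t^s\alpha(t,r)\,dr\bigr)$ then produces an explicit conditional-expectation representation of $\Delta Y(t)$.'' It does not, and this is exactly the point where BSVIE comparison differs from BSDE comparison. For fixed $t$ the natural semimartingale in $s$ is the extension $\lambda(t,s)=E^{\mathcal F_s}\bigl[\xi(t)+\int_s^T(\alpha(t,r)\Delta Y(r)+\beta(t,r)\Delta Z(t,r)+h(t,r))\,dr\bigr]$, with $\lambda(t,t)=\Delta Y(t)$; its drift contains $\alpha(t,s)\Delta Y(s)=\alpha(t,s)\lambda(s,s)$, the \emph{diagonal} of the two-parameter field, not $\alpha(t,s)\lambda(t,s)$. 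Hence $\Gamma(t,\cdot)$ does not cancel that term, and what you actually obtain after the Girsanov step is only the implicit inequality $\Delta Y(t)\ge E^{Q_t}\bigl[\int_t^T\alpha(t,r)\Delta Y(r)\,dr\mid\mathcal F_t\bigr]$, whose right-hand side is not visibly nonnegative. Closing this requires an additional argument: either a Gronwall-type bound on $E[|\Delta Y(t)^-|^2]$ (delicate, since the measure $Q_t$ varies with $t$ and one needs uniform moment bounds on the densities), or --- as in Wang and Yong \cite{Yong}, and as this paper does in proving Theorem \ref{15} --- an iteration that freezes the offending diagonal argument $Y(s)$ at the previous iterate, applies the multidimensional BSDE comparison theorem to the resulting family of $t$-parametrized BSDEs, uses the monotonicity of $\overline g$ in $y$ to propagate the ordering, and passes to the limit via a contraction estimate. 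As written, your proposal asserts the conclusion of that missing step rather than proving it.
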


\section{Existence and uniqueness theorem}

We now consider a new form of BSVIEs as follows:
\begin{equation}\label{4}
  \begin{cases}
  Y(t)=\psi(t) + \int_t^T g(t,s,Y(s),Z(t,s),Z(s,t),Y(s+\delta_{s}),Z(t,s+\zeta_{s}),Z(s+\zeta_{s},t)) ds \\
      \ \ \ \ \ \ \  \ \ \ - \int_t^T Z(t,s) dW(s), \ \ t\in [0,T]; \\
  Y(t)=\psi(t), \ \ t\in [T,T+K];\\
Z(t,s)=\eta(t,s), \ \ (t,s)\in [0,T+K]^{2}\setminus [0,T]^{2}. \\
  \end{cases}
\end{equation}
where $\delta_{\cdot}$ and $\zeta_{\cdot}$ are two $\mathbb{R}^{+}$-valued continuous functions defined on $[0,T]$ such that:

\begin{itemize}
\item[(i)] There exists a constant $K\geq 0$ such that, for all $ s\in [0,T]$,
  $$
  s+\delta_{s}\leq T+K; \ \ s+\zeta_{s}\leq T+K.
  $$

\item[(ii)] There exists a constant $M\geq 0$ such that, for all non-negative and integrable $g_{1}(\cdot), g_{2}(\cdot,\cdot)$, $t\in [0,T]$,
\begin{equation*}
\begin{cases}
  \int_t^T g_{1}(s+\delta_{s}) ds\leq M\int_t^{T+K} g_{1}(s) ds;   \\
   \int_t^T g_{2}(t,s+\zeta_{s}) ds\leq M \int_t^{T+K} g_{2}(t,s) ds; \\
   \int_t^T g_{2}(s+\zeta_{s},t) ds\leq M \int_t^{T+K} g_{2}(s,t) ds.
\end{cases}
\end{equation*}
\end{itemize}

We call equation (\ref{4}) the anticipating BSVIE.

%

   Assume that for all $(t,s)\in \Delta,$
   $g(t,s,y,z,x,\xi,\eta,\varsigma,\omega): \mathbb{R}^{m}\times \mathbb{R}^{m\times d} \times \mathbb{R}^{m\times d}
   \times L^{2}(\mathcal{F}_{r_{1}};\mathbb{R}^{m}) \times  L^{2}(\mathcal{F}_{r_{2}};\mathbb{R}^{m\times d})\times  L^{2}(\mathcal{F}_{r_{3}};\mathbb{R}^{m
   \times d})\times\Omega
   \longrightarrow L^{2}(\mathcal{F}_{s};\mathbb{R}^{m})$, where $r_{1},r_{2},r_{3} \in[s,T+K],$ and $g$ satisfies the following conditions:

\begin{itemize}
 \item [(H2)] There exists a constant $L>0$, such that, $P$-a.s.,  for all $(t,s)\in \Delta, y,y'\in \mathbb{R}^{m},z,z',x,x'\in {R}^{m\times d},
      \xi(\cdot),\xi'(\cdot)\in L_{\mathcal{F}}^2(s,T+K;\mathbb{R}^{m}),\eta(t,\cdot),\eta'(t,\cdot),
      \varsigma(\cdot,t),$ $\varsigma'(\cdot,t)\in L_{\mathcal{F}}^2(s,T+K;\mathbb{R}^{m\times d}), r,r'\in [s,T+K],$ we have
\begin{equation*}
 \begin{split}
      &|g(t,s,y,z,x,\xi(r),\eta(t,r'),\varsigma(r',t))-g(t,s,y',z',x',\xi'(r),\eta'(t,r'),\varsigma'(r',t)) | \\
  \leq& L\bigg(|y-y'|+\|z-z'\|+\|x-x'\| \\
      & +E^{\mathcal{F}_{s}}\left[|\xi(r)-\xi'(r)|+\|\eta(t,r')-\eta'(t,r')\|+\|\varsigma(r',t)-\varsigma'(r',t)\|\right]\bigg);\\
 &and \  E\int_0^T \int_t^T |g_{0}(t,s)|^{2} ds dt < \infty, \ where \ g_{0}(t,s)=g(t,s,0,0,0,0,0,0). \\
 \end{split}
\end{equation*}
\end{itemize}

\begin{remark}
Note that for all $(t,s)\in \Delta,$
$g(t,s,\cdot,\cdot,\cdot,\cdot,\cdot,\cdot)$ is $\mathcal{F}_{s}$-measurable ensures the solution to the anticipating BSVIE is $\mathcal{F}_{s}$-adapted.
\end{remark}

In order to establish the well-posedness of anticipating BSVIE (\ref{4}), we introduce the following space.
For any $\beta\geq 0$, we let $\mathcal{H}^{2}[0,T+K]$
be the space of all pairs
$$(Y,Z)\in L_{\mathcal{F}}^2(0,T+K;\mathbb{R}^{m})\times L_{\mathcal{F}}^2([0,T+K]^{2};\mathbb{R}^{m\times d})$$
 under the following norm
\begin{equation*}
  \| (Y(\cdot),Z(\cdot,\cdot)) \|_{\mathcal{H}^{2}[0,T+K]}
  \equiv \left[ E\int_0^{T+K} \left(e^{\beta t}|Y(t)|^{2} + \int_0^{T+K} e^{\beta s}\|Z(t,s)\|^{2} ds\right) dt \right]^{\frac{1}{2}}< \infty.
\end{equation*}
Different from $\mathcal{H}_{\Delta}^{2}$ defined in the previous section, we see that $Z(\cdot,\cdot)$ is defined on $[0,T+K]^{2}$.
Similar to $\mathcal{H}_{\Delta}^{2}$, we know $\mathcal{H}^{2}[0,T+K]$ is also a Hilbert space.

Next, let $\mathcal{M}^{2}[0,T+K]$ be the set of all pairs $(Y,Z)\in \mathcal{H}^{2}[0,T+K]$ such that Eq. (\ref{54}) holds in $[0,T+K]$, i.e.,
\begin{equation}\label{24}
  Y(t) =E[Y(t)] + \int_0^t Z(t,s)dW(s), \ \    0\leq t\leq T+K.
\end{equation}
Then for any  $(Y,Z)\in \mathcal{M}^{2}[0,T+K]$, one can show that
\begin{equation}\label{7}
\begin{split}
  &E\int_0^{T+K} \left(e^{\beta t}|Y(t)|^{2} + \int_0^{T+K} e^{\beta s}\|Z(t,s)\|^{2} ds \right) dt\\
 \leq& 2E\int_0^{T+K} \left(e^{\beta t}|Y(t)|^{2} + \int_t^{T+K} e^{\beta s}\|Z(t,s)\|^{2} ds \right) dt,
\end{split}
\end{equation}
since from (\ref{24}) one has
\begin{equation}\label{23}
  E\int_0^{T+K} \int_0^t e^{\beta s}\|Z(t,s)\|^{2} ds dt \leq E\int_0^{T+K} e^{\beta t}|Y(t)|^{2} dt.
\end{equation}
This means that we can use the following as an equivalent norm in $\mathcal{M}^{2}[0,T+K]$:
 \begin{equation*}
 \| (Y(\cdot),Z(\cdot,\cdot)) \|_{\mathcal{M}^{2}[0,T+K]}
 \equiv \bigg[ E\int_0^{T+K} \bigg(e^{\beta t}|Y(t)|^{2} + \int_t^{T+K} e^{\beta s}\|Z(t,s)\|^{2} ds\bigg) dt \bigg]^{\frac{1}{2}}.
 \end{equation*}
We also let $\overline{\mathcal{M}}^{2}[0,T+K]$ be the set of all pairs
$(\psi,\eta)\in L_{\mathcal{F}_{T}}^2(0,T+K;\mathbb{R}^{m})\times L_{\mathcal{F}}^2([0,T+K]^{2};\mathbb{R}^{m\times d})$
such that
\begin{equation*}
  \psi(t) =E[\psi(t)] + \int_0^t \eta(t,s)dW(s), \ \    0\leq t\leq T+K.
\end{equation*}
Since the space $L_{\mathcal{F}_{T}}^2(T,T+K;\mathbb{R}^{m})$ is equivalent to the space $L_{\mathcal{F}}^2(T,T+K;\mathbb{R}^{m})$,
the space $\overline{\mathcal{M}}^{2}[T,T+K]$ is equivalent to the space $\mathcal{M}^{2}[T,T+K]$ too.

We now state and proof the well-posedness theorem for anticipating BSVIE (\ref{4}).
\begin{theorem}\label{0}
  Suppose that $g$ satisfies (H2), and $\delta,\zeta$ satisfy (i) and (ii).
  Then for any $(\psi(\cdot),\eta(\cdot,\cdot))\in \overline{\mathcal{M}}^{2}[0,T+K]$,
  the anticipating BSVIE (\ref{4}) admits a unique adapted M-solution $(Y(\cdot),Z(\cdot,\cdot))\in \mathcal{H}^{2}[0,T+K]$.
\end{theorem}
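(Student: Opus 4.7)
The plan is to prove Theorem \ref{0} by a Banach fixed-point argument on the Hilbert space $\mathcal{M}^{2}[0,T+K]$ equipped with the weighted $\beta$-norm, for $\beta>0$ to be chosen sufficiently large at the end.

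First, I would define a mapping $\Phi:\mathcal{M}^{2}[0,T+K]\to\mathcal{M}^{2}[0,T+K]$ by freezing all of the anticipating arguments. Given $(y,z)\in\mathcal{M}^{2}[0,T+K]$, set
\begin{equation*}
\widetilde{g}(t,s)=g\bigl(t,s,y(s),z(t,s),z(s,t),y(s+\delta_{s}),z(t,s+\zeta_{s}),z(s+\zeta_{s},t)\bigr),\quad (t,s)\in\Delta.
\end{equation*}
Assumption (H2) combined with (ii) ensures $\widetilde{g}\in L^{2}_{\mathcal{F}}(\Delta;\mathbb{R}^{m})$. Proposition \ref{19} (or Proposition \ref{2} together with the martingale representation for $Y(t)-E[Y(t)]$) then produces a unique adapted M-solution $(Y,Z)$ on $[0,T]^{2}$ of the frozen BSVIE
\begin{equation*}
Y(t)=\psi(t)+\int_{t}^{T}\widetilde{g}(t,s)\,ds-\int_{t}^{T}Z(t,s)\,dW(s),\quad t\in[0,T].
\end{equation*}
Extend $Y(t):=\psi(t)$ on $[T,T+K]$ and $Z(t,s):=\eta(t,s)$ on $[0,T+K]^{2}\setminus[0,T]^{2}$. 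Since $(\psi,\eta)\in\overline{\mathcal{M}}^{2}[0,T+K]$, the M-relation (\ref{24}) holds on all of $[0,T+K]$, so $\Phi(y,z):=(Y,Z)\in\mathcal{M}^{2}[0,T+K]$ is well defined.

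The core of the proof is to show $\Phi$ is a strict contraction for $\beta$ large. For two inputs $(y^{i},z^{i})$ with images $(Y^{i},Z^{i})$ ($i=1,2$), the differences $(\Delta Y,\Delta Z)$ solve a simple BSVIE on $[0,T]$ with zero terminal datum and generator $\Delta\widetilde{g}:=\widetilde{g}^{1}-\widetilde{g}^{2}$. The a priori estimate (\ref{10}) of Proposition \ref{2} yields
\begin{equation*}
\|(\Delta Y,\Delta Z)\|_{\mathcal{M}^{2}[0,T+K]}^{2}\le\frac{C}{\beta}\,E\int_{0}^{T}\int_{t}^{T}e^{\beta s}|\Delta\widetilde{g}(t,s)|^{2}\,ds\,dt.
\end{equation*}
Using the Lipschitz bound of (H2), conditional Jensen to remove $E^{\mathcal{F}_{s}}[\,\cdot\,]$, and the three displays of (ii) to transform the shifted integrands $(\cdot)(s+\delta_{s})$, $(\cdot)(t,s+\zeta_{s})$, $(\cdot)(s+\zeta_{s},t)$ into ordinary integrals on $[t,T+K]$, and finally invoking (\ref{7})--(\ref{23}) to pass from $\int_{t}^{T+K}$ to $\int_{0}^{T+K}$, the right-hand side is dominated by $\tfrac{C'}{\beta}\|(\Delta y,\Delta z)\|_{\mathcal{M}^{2}[0,T+K]}^{2}$. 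Choosing $\beta$ so that $C'/\beta<1$ delivers the contraction, and its unique fixed point is the adapted M-solution of (\ref{4}).

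The principal obstacle is controlling the cross-terms $Z(s,t)$ and $Z(s+\zeta_{s},t)$: because the second argument can be smaller than the first, these values live off the diagonal region $\widetilde{\Delta}$, which is exactly why the ambient space is $\mathcal{M}^{2}[0,T+K]$ with $Z$ defined on the whole square and the M-solution identity (\ref{24}) is imposed. Assumption (ii) is designed precisely to absorb all three types of anticipating shifts into integrals over $[0,T+K]$, so that the Lipschitz contribution of the anticipating arguments is measured by the same norm used on the input. Without either the M-structure (which provides the bound (\ref{23}) on $Z(t,s)$ for $s<t$) or the compatibility condition (ii), the Picard iteration would fail to close.
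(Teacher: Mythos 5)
Your proposal is correct and follows essentially the same route as the paper: freeze the anticipating arguments to reduce to the simple BSVIE of Proposition \ref{2}, extend $(Y,Z)$ by $(\psi,\eta)$ and the M-relation (\ref{24}) to all of $[0,T+K]^{2}$, and then use the a priori estimate (\ref{10}) together with (H2), (ii) and (\ref{7})--(\ref{23}) to obtain a $C/\beta$-contraction on $\mathcal{M}^{2}[0,T+K]$ for $\beta$ large. Your closing remarks on why the M-structure and condition (ii) are indispensable for closing the Picard iteration accurately capture the role these hypotheses play in the paper's argument.
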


\begin{proof}
For any  $(y(\cdot),z(\cdot,\cdot))\in \mathcal{M}^{2}[0,T+K]$, consider the following BSVIE:
\begin{equation}\label{6}
  \begin{cases}
Y(t)=\psi(t) + \int_t^T \overline{g}(t,s) ds - \int_t^T Z(t,s) dW(s), \ \ t\in [0,T]; \\
Y(t)=\psi(t), \ \ t\in [T,T+K];\\
Z(t,s)=\eta(t,s), \ \ (t,s)\in [0,T+K]^{2}\setminus [0,T]^{2}, \\
  \end{cases}
\end{equation}
where
$$\overline{g}(t,s)=g(t,s,y(s),z(t,s),z(s,t),y(s+\delta_{s}),z(t,s+\zeta_{s}),z(s+\zeta_{s},t)).$$
From Proposition \ref{19} and \ref{2}, Eq. (\ref{6}) admits a unique adapted solution
$(Y(\cdot),Z(\cdot,\cdot))\in \mathcal{H}_{\widetilde{\Delta}}^{2}$.
Now we define $Z(\cdot,\cdot)$ on $\widetilde{\Delta}^{c}$ from the following:
\begin{equation*}
  Y(t)=EY(t) + \int_0^t Z(t,s) dW(s), \ \ t\in [0,T+K].
\end{equation*}
Then $(Y(\cdot),Z(\cdot,\cdot))\in \mathcal{M}^{2}[0,T+K]$ is an adapted M-solution to Eq. (\ref{6}).
Thus, the maping $(y(\cdot),z(\cdot,\cdot))\mapsto (Y(\cdot),Z(\cdot,\cdot))$ is well-defined.
By the estimate (\ref{10}) in Proposition \ref{2}, one has
\begin{equation*}
 \begin{split}
        &E\int_0^T \left(e^{\beta t}|Y(t)|^{2} + \int_t^T e^{\beta s}\|Z(t,s)\|^{2} ds\right) dt \\
     \leq& Ce^{\beta T}E \int_0^T |\psi(t)|^{2} dt
          + \frac{C}{\beta}E\int_0^T \int_t^T e^{\beta s}|\overline{g}(t,s)|^{2} dsdt.\\
 \end{split}
\end{equation*}
From (H2) and (\ref{23}), and note that $\delta,\zeta$ satisfy (i) and (ii), we have
\begin{equation}\label{25}
  \begin{split}
      &E\int_0^T \int_t^T e^{\beta s}|\overline{g}(t,s)|^{2} dsdt\\
  \leq& 5L^{2}E \int_0^T \int_t^T e^{\beta s}\bigg(|g_{0}(t,s)|^{2} + |y(s)|^{2} + \|z(t,s)\|^{2} + \|z(s,t)\|^{2} \\
      & \ \ \ \ \ \ \ \ \ \ \ \ \ \ \ \ \ \ \ \ \ \ \ \ +3\big[|y(s+\delta_{s})|^{2} + \|z(t,s+\zeta_{s})\|^{2} + \|z(s+\zeta_{s},t)\|^{2}\big]\bigg) dsdt \\
  \leq& 5L^{2}E \int_0^T \int_t^T e^{\beta s}|g_{0}(t,s)|^{2} dsdt + 10L^{2}(T+1)E \int_0^T \left(e^{\beta t}|y(t)|^{2}
        + \int_0^T e^{\beta s}\|z(t,s)\|^{2} ds\right)dt \\
      &+30L^{2}M(T+K+1)E \int_0^{T+K}\left(e^{\beta t}|y(t)|^{2} + \int_0^{T+K} e^{\beta s}\|z(t,s)\|^{2} ds\right)dt\\
  \leq& 5L^{2}E \int_0^T \int_t^T e^{\beta s}|g_{0}(t,s)|^{2} dsdt \\
      & +60L^{2}(M+1)(T+K+1)E \int_0^{T+K}\left(e^{\beta t}|y(t)|^{2} + \int_0^{T+K} e^{\beta s}\|z(t,s)\|^{2} ds\right)dt.\\
   \end{split}
\end{equation}
Hence
\begin{equation}\label{26}
  \begin{split}
      &E  \int_0^T \left(e^{\beta t}|Y(t)|^{2} + \int_t^T e^{\beta s}\|Z(t,s)\|^{2} ds\right) dt\\
  \leq&  Ce^{\beta T}E \int_0^T |\psi(t)|^{2} dt + \frac{C}{\beta}E \int_0^T \int_t^T e^{\beta s}|g_{0}(t,s)|^{2} dsdt \\
      &+ \frac{C}{\beta}E \int_0^{T+K}\left(e^{\beta t}|y(t)|^{2} + \int_0^{T+K} e^{\beta s}\|z(t,s)\|^{2} ds\right)dt.\\
   \end{split}
\end{equation}
Now if $(Y_{i}(\cdot),Z_{i}(\cdot,\cdot))$ is the corresponding adapted M-solution of $(y_{i}(\cdot),z_{i}(\cdot,\cdot))$ to BSVIE (\ref{6}), $i=1,2$, note (\ref{7}), then
\begin{equation*}
  \begin{split}
     &E  \int_0^{T+K} \left(e^{\beta t}|Y_{1}(t)-Y_{2}(t)|^{2} + \int_0^{T+K} e^{\beta s}\|Z_{1}(t,s)-Z_{2}(t,s)\|^{2} ds\right) dt\\
  \leq& \frac{C}{\beta}E\int_0^{T+K} \left(e^{\beta t}|y_{1}(t)-y_{2}(t)|^{2} + \int_0^{T+K} e^{\beta s}\|z_{1}(t,s)-z_{2}(t,s)\|^{2} ds\right) dt.
   \end{split}
\end{equation*}
Let $\beta=2C+1$, then $(y(\cdot),z(\cdot,\cdot))\mapsto (Y(\cdot),Z(\cdot,\cdot))$ is a contraction on $\mathcal{M}^{2}[0,T+K]$. This completes the proof.
\end{proof}

\begin{proposition}
Under the assumptions of Theorem \ref{0}, the solution of anticipating BSVIE (\ref{4}) satisfies
\begin{equation}\label{28}
\begin{split}
       &  E\bigg[\int_0^T e^{\beta t} |Y(t)|^{2} dt + \int_0^T \int_t^T e^{\beta s} \|Z(t,s)\|^{2} dsdt\bigg]\\
   \leq& CE\bigg[\int_0^{T+K} |\psi(t)|^{2} dt +\int_0^T \int_t^T e^{\beta s}|g_{0}(t,s)|^{2} dsdt
        + \int_T^{T+K} \int_T^{T+K} e^{\beta s}\|\eta(t,s)\|^{2} dsdt\\
       & + \int_0^T \int_T^{T+K} \big(e^{\beta s}\|\eta(t,s)\|^{2}+ e^{\beta t}\|\eta(s,t)\|^{2}\big) dsdt \bigg].
\end{split}
\end{equation}
\end{proposition}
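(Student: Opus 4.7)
The plan is to re-visit the a priori estimate (\ref{26}) derived inside the proof of Theorem \ref{0}, but now applied with $(y,z)=(Y,Z)$ being the solution itself (rather than a generic input) and with the boundary conditions $Y|_{[T,T+K]}=\psi$ and $Z|_{[0,T+K]^{2}\setminus[0,T]^{2}}=\eta$ substituted explicitly. The portions of the anticipating terms that land over $[T,T+K]$ then become $\psi$- and $\eta$-boundary data and stay on the right-hand side, while the portions over $[0,T]$ get absorbed back into the left-hand side.

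Concretely, I would start from (\ref{10}) applied to the BSVIE that $(Y,Z)$ satisfies on $[0,T]$ with the frozen generator
\[
\overline{g}(t,s):=g(t,s,Y(s),Z(t,s),Z(s,t),Y(s+\delta_{s}),Z(t,s+\zeta_{s}),Z(s+\zeta_{s},t)),
\]
and then bound $|\overline{g}(t,s)|^{2}$ using (H2) together with $|\sum_{i=1}^{7}a_{i}|^{2}\leq 7\sum a_{i}^{2}$. The $|g_{0}|^{2}$ piece directly gives the second term on the right-hand side of (\ref{28}). The three non-anticipating pieces $|Y(s)|^{2}$, $\|Z(t,s)\|^{2}$, $\|Z(s,t)\|^{2}$ are controlled by the left-hand side of (\ref{28}) after a Fubini swap on the sub-diagonal $\|Z(s,t)\|^{2}$-term and the weighted M-solution identity (\ref{23}) (available because $(Y,Z)\in\mathcal{M}^{2}[0,T+K]$).

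For each anticipating piece I proceed as in (\ref{25}): using $\delta_{s},\zeta_{s}\geq 0$ I replace $e^{\beta s}$ by $e^{\beta(s+\delta_{s})}$ (respectively $e^{\beta(s+\zeta_{s})}$), apply condition (ii) to convert the $\int_{t}^{T}$-integral to $M$ times an $\int_{t}^{T+K}$-integral, and then split this at $T$. The $[t,T]$-portion folds back into the left-hand side of (\ref{28}); the $[T,T+K]$-portion triggers the boundary conditions, so that the split of $|Y(s+\delta_{s})|^{2}$ contributes $\int_{T}^{T+K}|\psi|^{2}$ (which, combined with the $Ce^{\beta T}\int_{0}^{T}|\psi|^{2}$ already supplied by (\ref{10}), produces the full $\int_{0}^{T+K}|\psi|^{2}$ piece), and the splits of $\|Z(t,s+\zeta_{s})\|^{2}$ and $\|Z(s+\zeta_{s},t)\|^{2}$ contribute the $\|\eta\|^{2}$-integrals on the three boundary regions appearing in (\ref{28}). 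The fact that one $\eta$-integral uses $e^{\beta s}$ while another uses $e^{\beta t}$ is irrelevant up to a constant: on the bounded region $[T,T+K]$ these weights are comparable (via $e^{\beta s}\leq e^{\beta(T+K)}\leq e^{\beta(T+K)}e^{\beta t}$ for $t\geq 0$), so either bound can be absorbed into $C$. Finally, fixing $\beta$ large enough that $C/\beta$ times the combined constant from $L,M,T,K$ is below $1/2$ lets me absorb the remaining $(Y,Z)$ terms on the right-hand side into the left-hand side, yielding (\ref{28}) with a possibly enlarged universal $C$.

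The main difficulty is purely bookkeeping --- keeping track of which anticipating piece produces which of the four $\psi/\eta$ boundary regions on the right-hand side of (\ref{28}), and recognising that the mixed $e^{\beta s}/e^{\beta t}$ weights there are matters of convention rather than substance --- rather than any new analytic ingredient; every step reduces to an application of (H2), (ii), or (\ref{23})/(\ref{10}) already present in the proof of Theorem \ref{0}.
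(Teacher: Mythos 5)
Your proposal is correct and follows essentially the same route as the paper's own sketch proof: apply the a priori estimate (\ref{10}) to the equation with the generator frozen along the solution, bound the frozen generator via (H2) and conditions (i)--(ii) exactly as in (\ref{25})--(\ref{26}), split the resulting $[0,T+K]$ norms at $T$ so that the boundary data $\psi,\eta$ appear explicitly (using the M-solution identity (\ref{23})/(\ref{7}) for the sub-diagonal and full-square $Z$-integrals), and then take $\beta$ proportional to $C$ to absorb the interior part into the left-hand side. The only cosmetic difference is that you justify the mixed $e^{\beta s}/e^{\beta t}$ weights by comparability of the weights, whereas they arise in the paper simply from relabeling the two $\eta$-integrals over $[0,T]\times[T,T+K]$ and $[T,T+K]\times[0,T]$; both are fine.
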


\begin{proof}[Sketch proof]
By the estimate (\ref{10}), similar to (\ref{25}) and (\ref{26}), and note (\ref{7}), we have
\begin{equation*}
  \begin{split}
      &E  \left(\int_0^T e^{\beta t}|Y(t)|^{2}dt + \int_0^T\int_0^T e^{\beta s}\|Z(t,s)\|^{2} ds dt\right)\\
  \leq&  Ce^{\beta T}E \int_0^T |\psi(t)|^{2} dt + \frac{C}{\beta}E \int_0^T \int_t^T e^{\beta s}|g_{0}(t,s)|^{2} dsdt \\
      &+ \frac{C}{\beta}E \int_0^{T+K}e^{\beta t}|Y(t)|^{2}dt + \frac{C}{\beta}E \int_0^{T+K}\int_0^{T+K} e^{\beta s}\|Z(t,s)\|^{2} dsdt.
   \end{split}
\end{equation*}
Since
\begin{equation*}
  \begin{split}
      & \frac{C}{\beta}E \int_0^{T+K}e^{\beta t}|Y(t)|^{2}dt + \frac{C}{\beta}E \int_0^{T+K}\int_0^{T+K} e^{\beta s}\|Z(t,s)\|^{2} dsdt\\
     =& \frac{C}{\beta}E \bigg(\int_0^{T}e^{\beta t}|Y(t)|^{2}dt + \int_0^{T}\int_0^{T} e^{\beta s}\|Z(t,s)\|^{2} dsdt\bigg)\\
      &+ \frac{C}{\beta}E\int_T^{T+K} e^{\beta t}|\psi(t)|^{2}dt + \frac{C}{\beta}E\int_T^{T+K} \int_T^{T+K} e^{\beta s}\|\eta(t,s)\|^{2} dsdt\\
      &+ \frac{C}{\beta}E \bigg(\int_0^{T}\int_T^{T+K} e^{\beta s}\|\eta(t,s)\|^{2} dsdt + \int_T^{T+K}\int_0^{T} e^{\beta s}\|\eta(t,s)\|^{2} dsdt \bigg).
   \end{split}
\end{equation*}
Now let $\beta=2C$, then we obtain the estimate (\ref{28}).

\end{proof}

\section{Comparison theorem}

In this section we prove a comparison theorem for ABSVIEs of the following type: For $i=0,1,$
\begin{equation}\label{9}
  \begin{cases}
  Y^{i}(t)=\psi^{i}(t) + \int_t^T g^{i}(t,s,Y^{i}(s),Z^{i}(t,s),Y^{i}(s+\delta_{s})) ds - \int_t^T Z^{i}(t,s) dW(s), \ \ t\in [0,T]; \\
  Y^{i}(t)=\psi^{i}(t), \ \ t\in [T,T+K].
  \end{cases}
\end{equation}
For ABSVIEs of the above form, we need only the values $Z^{i}(t,s)$ of $Z^{i}(\cdot,\cdot)$
for $(t,s)\in \Delta$ and the notation of $M$-solution is not necessary.
%
%
%

It is easy to see that under the assumption of Theorem \ref{0}, for any $\psi^{i}(\cdot)\in L_{\mathcal{F}_{T}}^2(0,T+K;\mathbb{R}^{m})$,
ABSVIE (\ref{9}) admits a unique adapted solution
$(Y^{i}(\cdot),Z^{i}(\cdot,\cdot))\in L_{\mathcal{F}}^2(0,T+K;\mathbb{R}^{m})\times L_{\mathcal{F}}^2(\Delta;\mathbb{R}^{m\times d})$.

\begin{theorem}\label{15}
Let $\delta$ and $\zeta$ satisfy (i)-(ii), and $g^{i}$ satisfies (H2), $i=1,2$.
Suppose $\overline{g}=\overline{g}(t,s,y,z,\xi)$ satisfies (H2) and for all $(t,s,y,z)\in \Delta\times\mathbb{R}^{m}\times \mathbb{R}^{m\times d}$,
$\overline{g}(t,s,y,z,\cdot)$ is increasing, i.e.,
$\overline{g}(t,s,y,z,\xi_{1}(r))\leq \overline{g}(t,s,y,z,\xi_{2}(r))$,
if $\xi_{1}(r)\leq \xi_{2}(r)$, $\xi_{1}(\cdot),\xi_{2}(\cdot)\in L^{2}_{\mathcal{F}}(s,T+K;\mathbb{R})$, $r\in [s,T+K]$.
Moreover
\begin{equation*}
 \begin{split}
     g^{0}&(t,s,y,z,\xi)\leq \overline{g}(t,s,y,z,\xi)\leq g^{1}(t,s,y,z,\xi), \\
      \ \ &\forall (t,s,y,z,\xi)\in  \Delta\times \mathbb{R}^{m}\times \mathbb{R}^{m\times d}\times L^{2}(\mathcal{F}_{r};\mathbb{R}^{m}), \ a.s., \ a.e.,
\end{split}
\end{equation*}
and $\overline{g}_{z}(t,s,y,z,\xi)$ exists with
\begin{equation*}
 \begin{split}
      \overline{g}_{z_{1}}&(t,s,y,z,\xi),...,\overline{g}_{z_{d}}(t,s,y,z,\xi) \in \mathbb{R}_{d}^{m\times m},\\
      \ \ &\forall (t,s,y,z,\xi)\in  \Delta\times \mathbb{R}^{m}\times \mathbb{R}^{m\times d}\times L^{2}(\mathcal{F}_{r};\mathbb{R}^{m}), \ a.s., \ a.e.
 \end{split}
\end{equation*}
Then for any $\psi^{i}(\cdot)\in L_{\mathcal{F}_{T}}^2(0,T+K;\mathbb{R}^{m})$ satisfying $\psi^{0}(t)\leq \psi^{1}(t),  \ a.s., \ t\in[0,T+K],$
we have
   \begin{equation*}
   Y^{0}(t)\leq Y^{1}(t), \ a.s., \ t\in[0,T+K].
   \end{equation*}
\end{theorem}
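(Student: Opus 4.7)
The strategy is to introduce an auxiliary ABSVIE with generator $\overline{g}$ and terminal datum $\psi^1$, whose adapted solution $(\overline{Y},\overline{Z})$ will serve as a bridge between $(Y^1,Z^1)$ and $(Y^0,Z^0)$. By transitivity it then suffices to establish the two one-sided inequalities $Y^0 \le \overline{Y}$ (obtained analogously using an auxiliary built from $\psi^0$ instead) and $\overline{Y} \le Y^1$. By symmetry I describe only the latter.

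The main device is a Picard iteration that freezes the anticipating argument. Pick any initialization, and for $n \ge 1$ define $(\overline{Y}^{(n)},\overline{Z}^{(n)})$ and $(Y^{1,(n)},Z^{1,(n)})$ as the adapted M-solutions, for $t \in [0,T]$, of the classical (non-anticipating) BSVIEs
\begin{equation*}
\overline{Y}^{(n)}(t) = \psi^1(t) + \int_t^T \overline{g}\bigl(t,s,\overline{Y}^{(n)}(s),\overline{Z}^{(n)}(t,s),\overline{Y}^{(n-1)}(s+\delta_s)\bigr) ds - \int_t^T \overline{Z}^{(n)}(t,s) dW(s),
\end{equation*}
and the analogue with $g^1$ in place of $\overline{g}$, both extended by $\psi^1$ on $[T,T+K]$. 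Each iterate is well-posed by Proposition \ref{19}, and combining hypothesis (ii) with the stability estimate (\ref{10}) shows that the map sending $\xi(\cdot)$ to the solution obtained by freezing $\overline{Y}^{(n-1)} = \xi$ is a contraction on $\mathcal{M}^2[0,T+K]$ for $\beta$ large enough. Hence the two iteration sequences converge in $\mathcal{M}^2[0,T+K]$ to $(\overline{Y},\overline{Z})$ and $(Y^1,Z^1)$ respectively.

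The core of the proof is then the induction $\overline{Y}^{(n)}(t) \le Y^{1,(n)}(t)$ a.s.\ for every $n$. Given the inequality at level $n-1$, the monotonicity of $\overline{g}$ in its last slot together with the pointwise bound $\overline{g} \le g^1$ yields
\begin{equation*}
\overline{g}\bigl(t,s,y,z,\overline{Y}^{(n-1)}(s+\delta_s)\bigr) \le \overline{g}\bigl(t,s,y,z,Y^{1,(n-1)}(s+\delta_s)\bigr) \le g^1\bigl(t,s,y,z,Y^{1,(n-1)}(s+\delta_s)\bigr),
\end{equation*}
which, with the structural condition $\overline{g}_{z_k}\in \mathbb{R}_d^{m\times m}$ transferred to the frozen generator and the common terminal $\psi^1$, places us exactly in the setting of Proposition \ref{16} at level $n$. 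Its conclusion gives $\overline{Y}^{(n)} \le Y^{1,(n)}$, and passing to an almost-surely convergent subsequence preserves the inequality in the limit.

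The principal obstacle is verifying the Picard contraction: in contrast to the scheme of Theorem \ref{0}, where every $Y$- and $Z$-occurrence on the right-hand side is frozen simultaneously, here only the anticipating entries are frozen while $Y(s)$ and $Z(t,s)$ are solved jointly via Proposition \ref{19}. The Lipschitz hypothesis (H2), the bounds (i)--(ii) on $\delta,\zeta$, and the estimate (\ref{10}) together supply just enough control to close the contraction in the weighted $\mathcal{M}^2$-norm. A secondary subtlety concerns the monotonicity-in-$y$ hypothesis required to invoke Proposition \ref{16}: since Theorem \ref{15} only states monotonicity of $\overline{g}$ in its last slot, this must either be read as implicit in the intended structure of $\overline{g}$ or handled by selecting the intermediate generator of Proposition \ref{16} at iteration $n$ to be $\overline{g}(\cdot,\cdot,\cdot,\cdot,Y^{1,(n-1)}(\cdot+\delta_\cdot))$, which lies between the two frozen generators above.
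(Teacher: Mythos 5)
Your proposal follows essentially the same route as the paper's proof: freeze the anticipated argument $Y(s+\delta_s)$, apply the non-anticipating comparison theorem (Proposition \ref{16}) inductively using the monotonicity of $\overline{g}$ in its last slot together with the sandwich $g^0\le\overline{g}\le g^1$, and pass to the limit of the iteration via a weighted $\mathcal{M}^2$ contraction estimate. The only cosmetic differences are that the paper keeps $(Y^1,Z^1)$ fixed as the zeroth iterate (so a single, monotone decreasing sequence suffices and no second Picard scheme for $g^1$ is needed), and the monotonicity-in-$y$ hypothesis you rightly flag as required by Proposition \ref{16} is likewise left unaddressed in the paper's own argument.
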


\begin{proof}
Let $\overline{\psi}(\cdot)\in L_{\mathcal{F}_{T}}^2(0,T+K;\mathbb{R}^{m})$ and
\begin{equation*}
  \psi^{0}(t)\leq \overline{\psi}(t)\leq \psi^{1}(t),  \ a.s. \ t\in[0,T+K].
\end{equation*}
Let $(\overline{Y}(\cdot),\overline{Z}(\cdot,\cdot))\in L_{\mathcal{F}}^2(0,T+K;\mathbb{R}^{m})\times L_{\mathcal{F}}^2(\Delta;\mathbb{R}^{m\times d})$
 be the unique adapted solution to the following ABSVIE:
\begin{equation}\label{11}
  \begin{cases}
  \overline{Y}(t)=\overline{\psi}(t) + \int_t^T \overline{g}(t,s,\overline{Y}(s),\overline{Z}(t,s),
  \overline{Y}(s+\delta_{s}) ds    - \int_t^T \overline{Z}(t,s) dW(s), \ \  t\in [0,T]; \\
  \overline{Y}(t)=\overline{\psi}(t), \ \   t\in [T,T+K].\\
  \end{cases}
\end{equation}
Now we set $\widetilde{Y}_{0}(\cdot)=Y^{1}(\cdot)$ and consider the following BSVIE:
\begin{equation*}
  \begin{cases}
  \widetilde{Y}_{1}(t)=\overline{\psi}(t) + \int_t^T \overline{g}(t,s,\widetilde{Y}_{1}(s),\widetilde{Z}_{1}(t,s),
     \widetilde{Y}_{0}(s+\delta_{s})) ds  - \int_t^T \widetilde{Z}_{1}(t,s) dW(s), \ \ t\in [0,T]; \\
  \widetilde{Y}_{1}(t)=\overline{\psi}(t),  \ \  t\in [T,T+K].\\
  \end{cases}
\end{equation*}
Let $(\widetilde{Y}_{1}(\cdot),\widetilde{Z}_{1}(\cdot,\cdot))\in L_{\mathcal{F}}^2(0,T+K;\mathbb{R}^{m})\times L_{\mathcal{F}}^2(\Delta;\mathbb{R}^{m\times d})$
be the unique adapted solution to the above equation. Since
\begin{equation*}
  \begin{cases}
   \overline{g}(t,s,y,z,\widetilde{Y}_{0}(s+\delta_{s}))
   \leq g^{1}(t,s,y,z,\widetilde{Y}_{0}(s+\delta_{s})), \ \
       (t,s,y,z)\in \Delta \times \mathbb{R}^{m}\times \mathbb{R}^{m\times d}, \ a.s., \ a.e.;\\
   \overline{\psi}(t) \leq \psi^{1}(t),\  \ \ a.s. \ \ t\in[0,T+K].
  \end{cases}
\end{equation*}
By Proposition \ref{16}, we obtain that
\begin{equation*}
  \widetilde{Y}_{1}(t)\leq \widetilde{Y}_{0}(t), \ \ a.s. \ t\in[0,T+K].
\end{equation*}
Next, we consider the following BSVIE:
\begin{equation*}
  \begin{cases}
  \widetilde{Y}_{2}(t)=\overline{\psi}(t) + \int_t^T \overline{g}(t,s,\widetilde{Y}_{2}(s),\widetilde{Z}_{2}(t,s),\widetilde{Y}_{1}(s+\delta_{s})) ds
    - \int_t^T \widetilde{Z}_{2}(t,s) dW(s), \ \ t\in [0,T]; \\
  \widetilde{Y}_{2}(t)=\overline{\psi}(t), \ \ t\in [T,T+K].\\
  \end{cases}
\end{equation*}
Let $(\widetilde{Y}_{2}(\cdot),\widetilde{Z}_{2}(\cdot,\cdot))
\in L_{\mathcal{F}}^2(0,T+K;\mathbb{R}^{m})\times L_{\mathcal{F}}^2(\Delta;\mathbb{R}^{m\times d})$
 be the adapted solution to the above equation. Now, since $\xi\mapsto \overline{g}(t,s,y,z,\xi)$ is increasing, we have
\begin{equation*}
   \overline{g}(t,s,y,z,\widetilde{Y}_{1}(s+\delta_{s})) \leq \overline{g}(t,s,y,z,\widetilde{Y}_{0}(s+\delta_{s})), \ \
    (t,s,y,z)\in \Delta\times \mathbb{R}^{m}\times \mathbb{R}^{m\times d}, \ a.s., \ a.e.
\end{equation*}
Hence, similar to the above, we obtain
\begin{equation*}
  \widetilde{Y}_{2}(t)\leq \widetilde{Y}_{1}(t), \ \ \ a.s. \ t\in[0,T+K].
\end{equation*}
By induction, we can construct a sequence
$\{(\widetilde{Y}_{k}(\cdot),\widetilde{Z}_{k}(\cdot,\cdot))\}_{k\geq 1}
\in L_{\mathcal{F}}^2(0,T+K;\mathbb{R}^{m})\times L_{\mathcal{F}}^2(\Delta;\mathbb{R}^{m\times d})$
such that
\begin{equation*}
  \begin{cases}
  \widetilde{Y}_{k}(t)=\overline{\psi}(t) + \int_t^T \overline{g}(t,s,\widetilde{Y}_{k}(s),\widetilde{Z}_{k}(t,s),
  \widetilde{Y}_{k-1}(s+\delta_{s})) ds - \int_t^T \widetilde{Z}_{k}(t,s) dW(s), \ t\in [0,T]; \\
  \widetilde{Y}_{k}(t)=\overline{\psi}(t), \ t\in [T,T+K].\\
  \end{cases}
\end{equation*}
Similarly, we deduce
\begin{equation*}
  Y^{1}(t)=\widetilde{Y}_{0}(t)\geq \widetilde{Y}_{1}(t)\geq \widetilde{Y}_{2}(t)\cdots, \ \ \ a.s. \ t\in[0,T+K].
\end{equation*}
Next we will show that the sequence $\{(\widetilde{Y}_{k}(\cdot),\widetilde{Z}_{k}(\cdot,\cdot))\}_{k\geq 2}$  is Cauchy sequence.
By utilizing the estimate (\ref{10}), we have
\begin{equation*}
   \begin{split}
     &E\int_0^T \left(e^{\beta t}|\widetilde{Y}_{k}(t)-\widetilde{Y}_{k-1}(t)|^{2}
      + \int_t^T e^{\beta s}\|\widetilde{Z}_{k}(t,s)-\widetilde{Z}_{k-1}(t,s)\|^{2} ds \right) dt\\
\leq& \frac{C}{\beta}E\int_0^T \int_t^T e^{\beta s}
      \bigg(\overline{g}(t,s,\widetilde{Y}_{k}(s),\widetilde{Z}_{k}(t,s),\widetilde{Y}_{k-1}(s+\delta_{s}))\\
     & \ \ \ \ \ \ \ \ \ \ \ \ \ \ \ \ \ \ \ \ \ \ \
     -\overline{g}(t,s,\widetilde{Y}_{k-1}(s),\widetilde{Z}_{k-1}(t,s),\widetilde{Y}_{k-2}(s+\delta_{s}))\bigg)^{2} dsdt\\
\leq& \frac{C}{\beta}E\int_0^T \int_t^T e^{\beta s}\bigg(|\widetilde{Y}_{k}(s)-\widetilde{Y}_{k-1}(s)|^{2} +\|\widetilde{Z}_{k}(t,s)-\widetilde{Z}_{k-1}(t,s)\|^{2}\\
     & \ \ \ \ \ \ \ \ \ \ \ \ \ \ \ \ \ \ \ \ \ \ \ \  +|\widetilde{Y}_{k-1}(s+\delta_{s})-\widetilde{Y}_{k-2}(s+\delta_{s})|^{2}\bigg) dsdt\\
\leq& \frac{C}{\beta}E\int_0^T\left( e^{\beta t}|\widetilde{Y}_{k}(t)-\widetilde{Y}_{k-1}(t)|^{2}
      +\int_t^T e^{\beta s}\|\widetilde{Z}_{k}(t,s)-\widetilde{Z}_{k-1}(t,s)\|^{2} ds\right) dt \\
     & \ \ \ +\frac{C}{\beta}E\int_0^{T+K} e^{\beta t}|\widetilde{Y}_{k-1}(t)-\widetilde{Y}_{k-2}(t)|^{2} dt.
   \end{split}
\end{equation*}
Hence
\[\begin{split}\label{13}
   &(1-\frac{C}{\beta})E\int_0^T \left(e^{\beta t}|\widetilde{Y}_{k}(t)-\widetilde{Y}_{k-1}(t)|^{2}
      + \int_t^T e^{\beta s}\|\widetilde{Z}_{k}(t,s)-\widetilde{Z}_{k-1}(t,s)\|^{2} ds\right) dt \\
  \leq& \frac{C}{\beta} E\int_0^{T+K} e^{\beta t}|\widetilde{Y}_{k-1}(t)-\widetilde{Y}_{k-2}(t)|^{2} dt.
\end{split}\]
Note that the constant $C>0$ in the above can be chosen independent of $\beta\geq 0$.
Thus by choosing $\beta=3C$, we obtain
\[\begin{split}
    & E\int_0^{T+K} \left(e^{\beta t}|\widetilde{Y}_{k}(t)-\widetilde{Y}_{k-1}(t)|^{2}
       + \int_t^{T+K} e^{\beta s}\|\widetilde{Z}_{k}(t,s)-\widetilde{Z}_{k-1}(t,s)\|^{2} ds\right) dt\\
\leq& \frac{1}{2} E\int_0^{T+K} e^{\beta t}|\widetilde{Y}_{k-1}(t)-\widetilde{Y}_{k-2}(t)|^{2} dt\\
\leq& \frac{1}{2} E\int_0^{T+K} \left(e^{\beta t}|\widetilde{Y}_{k-1}(t)-\widetilde{Y}_{k-2}(t)|^{2}
      + \int_t^{T+K} e^{\beta s}\|\widetilde{Z}_{k-1}(t,s)-\widetilde{Z}_{k-2}(t,s)\|^{2} ds\right) dt\\
\leq& (\frac{1}{2})^{k-2} E\int_0^{T+K} \left(e^{\beta t}|\widetilde{Y}_{2}(t)-\widetilde{Y}_{1}(t)|^{2}
      + \int_t^{T+K} e^{\beta s}\|\widetilde{Z}_{2}(t,s)-\widetilde{Z}_{1}(t,s)\|^{2} ds \right) dt.
\end{split}\]
It follows that  $\{(\widetilde{Y}_{k}(\cdot),\widetilde{Z}_{k}(\cdot,\cdot))\}_{k\geq 1}$  is a Cauchy sequence in the Banach space
$ L_{\mathcal{F}}^2(0,T+K;\mathbb{R}^{m})\times L_{\mathcal{F}}^2(\Delta;\mathbb{R}^{m\times d})$.
Denote their limits by $\widetilde{Y}(\cdot)$ and $\widetilde{Z}(\cdot,\cdot)$, respectively.
Then $(\widetilde{Y}(\cdot),\widetilde{Z}(\cdot,\cdot))$ $\in  L_{\mathcal{F}}^2(0,T+K;\mathbb{R}^{m})\times L_{\mathcal{F}}^2(\Delta;\mathbb{R}^{m\times d})$ and
\begin{equation*}
  \lim_{k\rightarrow \infty}\left(E\int_0^{T+K} e^{\beta t}|\widetilde{Y}_{k}(t)-\widetilde{Y}(t)|^{2} dt
      + E\int_0^{T+K}\int_t^{T+K} e^{\beta s}\|\widetilde{Z}_{k}(t,s)-\widetilde{Z}(t,s)\|^{2} ds dt\right)=0,
\end{equation*}
also we have
\begin{equation*}
  \begin{cases}
  \widetilde{Y}(t)=\overline{\psi}(t) + \int_t^T \overline{g}(t,s,\widetilde{Y}(s),\widetilde{Z}(t,s),
  \widetilde{Y}(s+\delta_{s})) ds
   - \int_t^T \widetilde{Z}(t,s) dW(s), \ t\in [0,T]; \\
  \widetilde{Y}(t)=\overline{\psi}(t), \ t\in [T,T+K].
  \end{cases}
\end{equation*}
Connecting the above equation with the equation (\ref{11}), by Theorem \ref{0}, we have
\begin{equation*}
  \overline{Y}(t)= \widetilde{Y}(t), \ \ \ a.s. \ t\in[0,T+K].
\end{equation*}
Hence we obtain
\begin{equation*}
  \overline{Y}(t)\leq Y^{1}(t), \ \ \ a.s. \ t\in[0,T+K].
\end{equation*}
Similarly, we can prove that
\begin{equation*}
 Y^{0}(t)\leq \overline{Y}(t),  \ \ \ a.s. \ t\in[0,T+K].
\end{equation*}
Therefore, our conclusion follows.
\end{proof}

\begin{example}
Let $g^{0}(t,s,\xi(r))=-E^{\mathcal{F}_{s}}[|\xi(r)|]-ln2$, $g^{1}(t,s,\xi(r))=E^{\mathcal{F}_{s}}[|\xi(r)|]+\pi$.
    We choose  $\overline{g}(t,s,\xi(r))=E^{\mathcal{F}_{s}}[\xi(r)]+1$.
    It's easy to check that $g^{0},g^{1}$ and $\overline{g}$ satisfy the assumptions of Theorem \ref{15}.
    If the terminal condition satisfies $\psi^{0}(t)\leq \psi^{1}(t),  \ a.s., \ t\in[0,T+K],$
     we  derive
    $$ Y^{0}(t)\leq Y^{1}(t), \ \ a.s. \ t\in[0,T+K].$$
\end{example}

\section*{Acknowledgements}

\end{document}